\setlist[enumerate]{}
\newtheorem{theorem}[subsection]{Theorem}
\newtheorem{defn}[subsection]{Definition}
\newtheorem{lemma}[subsection]{Lemma}
\newtheorem{cor}[subsection]{Corollary}
\theoremstyle{definition}
\newtheorem{rmk}[subsection]{Remark}
\title{A partition of finite rings makes lifting possible}
\author{Vineeth Chintala}
\address{\small{}}
\email{vineethreddy90@gmail.com}
\thanks{ The author is supported by the DST-Inspire faculty fellowship in India.}
\begin{document}

\begin{abstract}
We show that every finite ring has a partition, where each block corresponds to one idempotent. Remarkably, this partition provides a way to \emph{lift} a wide variety of special elements such as idempotents, nilpotents, unipotents, roots of unity and regular elements.
\smallskip
\noindent \textbf{Keywords.} {Partition; Idempotent; Lifting; Regular elements; Nilpotent}
\end{abstract}

\maketitle

\section{Idempotent Partitions}
We show that every finite ring has a partition where each block corresponds to one idempotent. This partition provides a way to \emph{lift} a wide variety of special elements such as idempotents, nilpotents, unipotents, roots of unity and regular elements.

\begin{rmk}Throughout the paper, we assume that $R$ is a finite associative ring. The interested reader can easily check that most of the results extend (with exactly the same proofs) to finite power-associative rings.
\end{rmk}

An element $e$ is said to be an idempotent if $e^2 =e$. Idempotents play a central role in the classification of algebras, and the partition indicates that they may also have some combinatorial significance.  

\begin{lemma}\label{lemma1}
Let $R$ be a finite  ring and $x \in R$. 
\begin{enumerate}[label=\alph*.]
\item Then $e_x = x^n$ is an idempotent for some positive integer $n$. 
\item Further, the idempotent $e_x$ is uniquely determined by $x \in R$.\qedhere 
\end{enumerate}
\end{lemma}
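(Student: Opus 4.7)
The plan is to exploit finiteness of $R$ to force the sequence of powers $x, x^2, x^3, \dots$ (which is unambiguous by power-associativity) to become eventually periodic, and then pick an exponent that lies deep enough in the periodic part to make $x^n$ square to itself. For uniqueness, I would play two candidate idempotent-powers against each other by comparing them both to $x^{nm}$.

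For part (1), I would first observe that since $R$ is finite, the map $k \mapsto x^k$ on positive integers has finite image, so there exist $i < j$ with $x^i = x^j$. Setting $p = j - i$ and $a = i$, one checks by induction that $x^{k+p} = x^k$ for every $k \geq a$, so the powers are eventually periodic with period (dividing) $p$ past index $a$. Now I would choose $n$ to be any multiple of $p$ with $n \geq a$ — for instance, $n = ap$. Because $n \geq a$ and $p \mid n$, iterating $x^{k+p} = x^k$ gives $x^{n + n} = x^n$. Power-associativity lets me write $(x^n)^2 = x^n \cdot x^n = x^{2n}$, so $(x^n)^2 = x^n$, and $e_x := x^n$ is the desired idempotent.

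For part (2), suppose $x^n$ and $x^m$ are both idempotents. Since $x^n$ is idempotent, $(x^n)^k = x^n$ for every $k \geq 1$, and by power-associativity $(x^n)^k = x^{nk}$. Applying this with $k = m$ gives $x^{nm} = x^n$, and by symmetry $x^{nm} = x^m$, hence $x^n = x^m$. Uniqueness follows.

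The only real subtlety — and what the statement quietly relies on — is that power-associativity must be invoked to justify each of the identities $x^{n+n} = x^n \cdot x^n$ and $x^{nk} = (x^n)^k$; without it, neither $x^{2n}$ nor $(x^n)^k$ would even be well-defined expressions. Once this is granted, both parts are short combinatorial consequences of the pigeonhole principle applied to powers of a single element.
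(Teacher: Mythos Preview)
Your proof is correct and follows essentially the same approach as the paper: pigeonhole on the powers of $x$ for part~(1), and comparison via the common power $x^{nm}$ for part~(2), which is exactly the paper's argument. The only cosmetic difference in part~(1) is that the paper restricts attention to the subsequence $x^{2^i}$ to obtain a relation $y = y^t$ (with $y = x^{2^r}$) and then verifies directly that $y^{t-1}$ is idempotent, whereas you establish full eventual periodicity and choose $n$ to be a multiple of the period lying in the periodic tail.
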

\begin{proof}
The set $\{x^{2^i} :  i \geq 1\}$ is finite and so $x^{2^r} = (x^{2^r})^t$ for some integers $r,t > 1$. 
\vskip 1mm
Let $y = x^{2^r}$. Then $y^{t-1}$ is an idempotent. Indeed, 
\[ y^{2(t-1)} = y^{t-2}y^t = y^{t-2}y = y^{t-1}. \]
To prove uniqueness, suppose $x^r = e_1$ and $x^s = e_2$. Then
\[ e_1 = (x^r)^s = (x^s)^r = e_2. \qedhere \]
\end{proof}

\begin{defn}
For each idempotent $e\in R$, define \[B_e = \{x \in R : x^k = e \text{ for some positive integer } k\}. \qedhere\] 
\end{defn}
\vskip 2mm
Put every element $x$ in the corresponding block $B_{e_x}$. 
This gives a partition of the ring into blocks, where each block corresponds to one idempotent. 
   \[ R = B_0 \sqcup B_1 \sqcup \cdots \]

 Here we are talking about partitions of rings as sets. For such partitions to be useful, they should be compatible with ring homomorphisms. Indeed, this compatibility is easy to check.

\begin{theorem}\label{compatible}
Let $\phi : R\rightarrow R'$ be a homomorphism  between finite rings. 
Then the map $\phi$ preserves their idempotent partitions 
\[ \begin{tikzcd}
R \arrow{r}{\phi} \arrow[swap]{d}{} & R' \arrow{d}{} \\%
\{B_e\}\arrow{r}{\phi}& \{B'_{e'}\}
\end{tikzcd}
\]
where $\{B_e\}$ and $\{B'_{e'}\}$ denote the idempotent-partitions of $R, R'$ respectively.
\end{theorem}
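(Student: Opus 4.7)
The plan is to show the compatibility element-wise: for any $x \in R$, if $x$ lies in the block $B_{e}$, then $\phi(x)$ lies in the block $B'_{\phi(e)}$. Once this is established, the square commutes by definition, with the induced map on blocks sending $B_e \mapsto B'_{\phi(e)}$.

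First I would verify that $\phi$ carries idempotents to idempotents: if $e^2 = e$ in $R$, then $\phi(e)^2 = \phi(e^2) = \phi(e)$, so $\phi(e)$ is an idempotent in $R'$. This is needed to make sense of the target block $B'_{\phi(e)}$. Here I am tacitly assuming $R'$ is also a finite power-associative ring with identity, so that Lemma~\ref{lemma1} applies to it as well and the partition $\{B'_{e'}\}$ is defined.

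Next, take any $x \in B_e$, so that $x^k = e$ for some positive integer $k$. Applying $\phi$ and using multiplicativity gives $\phi(x)^k = \phi(x^k) = \phi(e)$. Since $\phi(e)$ is an idempotent in $R'$, this exhibits $\phi(x)$ as an element whose power reaches $\phi(e)$; by the uniqueness clause of Lemma~\ref{lemma1}(2), the idempotent associated to $\phi(x)$ must be exactly $\phi(e)$, i.e. $e_{\phi(x)} = \phi(e_x)$. Hence $\phi(x) \in B'_{\phi(e)}$, which is precisely the compatibility claim.

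There is really no substantial obstacle here, since the hard content is already packed into Lemma~\ref{lemma1}; the only point worth articulating clearly is the use of uniqueness to identify $\phi(e_x)$ as the idempotent attached to $\phi(x)$, rather than just some idempotent that $\phi(x)$ happens to power to. I would conclude by observing that the same argument, run element-wise, defines the bottom arrow $\{B_e\} \to \{B'_{e'}\}$ unambiguously and makes the diagram commute.
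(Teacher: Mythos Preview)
Your proof is correct and follows essentially the same route as the paper's: take $a \in B_e$, use multiplicativity to get $\phi(a)^k = \phi(e)$, and conclude $\phi(a) \in B'_{\phi(e)}$. The paper's version is terser and appends the extra remark that when $\phi$ is surjective one has $B'_{e'} = \bigcup_{\phi(e)=e'} \phi(B_e)$, but otherwise the arguments coincide.
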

\begin{proof}
Let $a \in B_e$. Since $\phi(a^n) = \phi(a)^n$, we have $\phi(a) \in B'_{\phi(e)}$. In other words, \[\phi(B_e) \subseteq B'_{\phi(e)}. \qedhere \] 
\end{proof}
\vskip 3mm

\begin{rmk}In fact, when $\phi$ is surjective, we have 
\[B'_{e'} = \{\bigcup_e \phi(B_e) : \phi(e) = e'\}.\] 
Notice that the number of blocks in $\phi(R)$ is at most the number of blocks in $R$.
\end{rmk}

\vskip 5mm
\section{Lifting of special elements}
\vskip 5mm

The idempotent partition of the ring opens the door for the lifting of many types of elements.
\vskip 2mm
\begin{defn}Given any map $\phi : R \rightarrow S$, we say that idempotents can be lifted (over $\phi$) if every idempotent in $S$ has a preimage which is also an idempotent. The lifting of other special elements is similarly defined.
\end{defn}

\vskip 2mm

\begin{theorem}\label{lifting}
Let $\phi : R \rightarrow S$ be a surjective homomorphism between two finite power-associative rings. Then the following types of elements can \textbf{always} be lifted over $\phi$.
\begin{enumerate}[label=\alph*.]
\item Idempotents
\item Nilpotents
\item Unipotents
\item Roots of unity
\qedhere
\end{enumerate}
\end{theorem}
\begin{proof}
Let $\bar{x} = \phi(x)$ for $x \in R$, and $e = x^n$ be the \emph{idempotent} corresponding to $x$ (see Lemma~\ref{lemma1}).  The goal, in each of the below cases, is to find a pre-image with the same property as $\bar{x}$.
\begin{enumerate}[label=\alph*.]
    \item Suppose $\bar{x} \in S$ is an idempotent. Then $\phi(e) = \bar{x}^n = \bar{x}$ is a solution.
    \vskip 2mm
    \item Suppose $\bar{x} \in S$ is a nilpotent. Then we have $\bar{e} = 0$. 
    \vskip 1mm
     \noindent Now consider $z = x(1-e)$. Note that $\bar{z} = \bar{x}$ since $\bar{e} =0$. Since $z^n = x^n(1-e)^n = e(1-e)= 0$, the element $z$ is a solution.
    \vskip 2mm
    \item Suppose $\bar{x} \in S$ is a unipotent; In other words $\bar{1} - \bar{x}$ is a nilpotent which (we just proved) lifts to a nilpotent $z= (1-y)$ for some $y\in R$. Clearly $\bar{y} = \bar{x}$, so $y$ is a solution.
        \vskip 2mm
    \item Suppose $\bar{x}^m = \bar{1}$. Consider $y= xe + 1-e$. Note that $\bar{y} = \bar{x}$ as $\bar{e} = \bar{1}$. Moreover since $e(1-e)$ = 0, we have 
    \[ y ^k = (xe + 1-e)^k = (xe)^k + (1-e) \] for all positive integers $k$. In particular, $y^n = e + 1-e =1.$ \qedhere
\end{enumerate}
\end{proof}

\begin{cor}
Let $\phi : R \rightarrow S$ be a surjective homomorphism between two finite rings. Let $idem(R), idem(S)$ denote the set of idempotents of the respective rings $R, S$. Then 
\[|idem(R)| \geq |idem(S)|.\]
\end{cor}

\begin{rmk}For associative rings, it is known that idempotents in $R/I$ lift to $R$ when $I$ is a nil ideal or when $R$ is $I$-$adically$ complete (\cite{L}, Theorems 21.28, 21.31). Neither of these conditions hold when $I$ contains an idempotent. The paper \cite{DDN} gives a few counterexamples where idempotents cannot always be lifted in infinite associative rings.
\end{rmk}
%%%

The lifting of idempotents forces the lifting of other properties. (See \cite{KLN} for some consequences of idempotent lifting). We will now see that \emph{regular} elements can also be lifted. 

\begin{defn}An element $x$ is said to be regular (or \emph{von Neumann regular}) if $xyx =x $ for some element $y \in R$. Notice that every idempotent $e$ is regular as $e^2  = e$ (take $x = e, y =1$). 
\end{defn}

In general, lifting of idempotents does not imply lifting of regular elements. For a counterexample, consider the map 
$\phi : \mathbb{Z} \rightarrow \mathbb{Z}/{8\mathbb{Z}}$. Here $\mathbb{Z}/{8\mathbb{Z}}$ has only trivial idempotents $\{\overline{0},\overline{1}\}$ which can obviously be lifted, but some of its regular elements $\{ \overline{3}, \overline{5} \}$ cannot be lifted to regular elements in $\mathbb{Z}$. However, it was shown that if idempotents lift modulo \emph{every} left ideal contained in a two-sided ideal $I$, then regular elements lift modulo $I$ (\cite{KL}, Theorem 9.3). Unsurprisingly the proof is a bit simpler for finite rings.

\vskip 2mm
The following result shows that regular elements can \emph{always} be lifted for finite rings.

\begin{theorem}
Let $R$ be a finite associative ring and $I$ be a left ideal. Then regular elements in $R/I$ can be lifted to regular elements in $R$.
\end{theorem}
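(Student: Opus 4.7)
The plan is to imitate the proof of Theorem~\ref{lifting2}: write down an explicit representative of the coset of $\bar a$ which happens to be regular in $R$. Given a regular $\bar a\in R/I$, I choose $a,b\in R$ with $aba-a\in I$. I would then focus on the element $ab\in R$ and invoke Lemma~\ref{lemma1}: for some $n\ge 1$, the element $E:=(ab)^n$ is an idempotent of $R$. The candidate lift is $a':=Ea=(ab)^n a$.

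To see that $a'\equiv a\pmod I$, I would telescope. The successive differences $(ab)^{k+1}a-(ab)^k a$ all equal $(ab)^k(aba-a)$, which lies in $R\cdot I\subseteq I$ by the left-ideal hypothesis on $I$. Summing from $k=0$ to $n-1$ yields $a'-a\in I$, exactly as in the argument that $x^2-x\in I\Rightarrow x^n-x\in I$ underlying Theorem~\ref{lifting2}.

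For regularity of $a'$ in $R$, the pseudo-inverse I would propose is $y:=b(ab)^{n-1}$. The point is that $a'y$ collapses, by associativity alone, to $(ab)^{2n}=E^2=E$, after which $a'ya'=E\cdot Ea=Ea=a'$ is immediate from $E^2=E$.

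No genuine obstacle arises. The only substantive choices are (i)~to take powers of $ab$ rather than $ba$, so that the idempotent $E$ sits on the correct side of $a$ to pair with the left-ideal hypothesis on $I$, and (ii)~to set $y=b(ab)^{n-1}$, which is the element designed to make $a'y$ telescope into $E^2$. Both choices are forced once Lemma~\ref{lemma1} has produced the idempotent $E$, and the whole argument is then essentially a one-line computation.
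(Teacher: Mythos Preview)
Your proof is correct and follows essentially the same approach as the paper's: both invoke Lemma~\ref{lemma1} to make $E=(ab)^n$ idempotent and then exhibit an explicit regular lift of the form $(ab)^m a$, checking congruence modulo $I$ via the left-ideal hypothesis. The only cosmetic difference is that the paper takes $m=2n-1$ with pseudo-inverse $b$ itself (so that $zb=E$ and $Ez=z$), whereas you take the slightly leaner $m=n$ with pseudo-inverse $b(ab)^{n-1}$; the verifications are otherwise parallel.
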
 
\begin{proof}
Let $x,y$ be two elements such that $xyx -x \in I$. Since $R$ is a finite ring, $e = (xy)^n$ is an idempotent for some positive integer $n$.
We need to show that there is a regular element $z \in R$ such that $z -x \in I$.
\vskip 1mm
Take $z = (xy)^{2n-1} x$. Then $zy = e$ and $ez =z$. Therefore
\[ zyz = ez = z.\]
Further, \[z-x =  ((xy)^{2n-1} - 1)x =  r (xy-1)x \] for some $r \in R$.
Since $(xy-1) x = xyx -x  \in I$, we have $z -x \in I$. 
\end{proof}

\vskip 5mm
\section{Zero Divisors in associative rings}

\vskip 3mm
\begin{theorem}\label{block}
Let $R$ be a finite associative ring with partition $\{B_e : e \in Idem(R)\}$. The blocks $B_e$ satisfy the following properties.
  \begin{enumerate}[label=\alph*.]
  \item Let $x_i \in B_{e_i}$. If $x_1x_2 =0$, then $e_1e_2 = 0$.
  \item Let $x,y \in B_e$. If $xy =0$, then $e=0$.
   \item $B_0$ consists of all nilpotent elements and $B_1$ consists of all invertible elements in $R$.\qedhere
  \end{enumerate}
\end{theorem}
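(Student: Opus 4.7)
The plan is to exploit the associativity of $R$ together with Lemma \ref{lemma1}, which supplies positive integers $n_1, n_2$ with $a_1^{n_1} = e_1$ and $a_2^{n_2} = e_2$. Once those integers are in hand, all three parts follow from short manipulations of powers.

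For part (1), I would simply compute in the associative ring $R$,
\[ e_1 e_2 \;=\; a_1^{n_1} a_2^{n_2} \;=\; a_1^{n_1-1}(a_1 a_2)\,a_2^{n_2-1} \;=\; 0, \]
where the middle factor vanishes by hypothesis; if either $n_i$ equals $1$, the corresponding $a_i^{n_i-1}$ is the identity and the equation still holds. Part (2) is then an immediate specialization: apply (1) with $a_1 = a$, $a_2 = b$, $e_1 = e_2 = e$ to obtain $e^2 = 0$, and conclude $e = e^2 = 0$ from idempotence.

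For part (3), the description of $B_0$ is essentially tautological: $x \in B_0$ says some positive power of $x$ equals $0$, which is precisely the definition of nilpotent. For $B_1$, one direction is immediate — $x^k = 1$ gives $x \cdot x^{k-1} = 1$, so $x$ is a unit. For the converse, if $x$ is invertible with inverse $y$, then $e_x = x^n$ is invertible (with inverse $y^n$), so one needs the standard fact that the only invertible idempotent in a unital ring is $1$: indeed $e^2 = e$ and $ef = 1$ force $e = e \cdot 1 = e \cdot ef = e^2 f = ef = 1$. Hence $e_x = 1$, i.e.\ $x \in B_1$.

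I do not anticipate a serious obstacle here; the main thing to be careful about is the edge case $n_i = 1$ in the displayed computation for (1), and the small lemma that an invertible idempotent in a unital ring equals $1$. Both are one-line observations, so the whole proof amounts to three direct applications of Lemma \ref{lemma1} plus associativity.
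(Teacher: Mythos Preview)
Your proposal is correct and follows essentially the same approach as the paper: invoke Lemma~\ref{lemma1} to write the idempotents as powers, factor $a_1^{n_1}a_2^{n_2}$ through $a_1a_2=0$, specialize for part~(2), and use that an invertible idempotent must equal $1$ for part~(3). The paper's write-up is terser (it does not spell out the $n_i=1$ edge case or the invertible-idempotent lemma), but the argument is the same.
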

\begin{proof}
Suppose $x_1^r = e_1$ and $x_2^s = e_2$.
Then $x_1x_2= 0$ implies that \[e_1e_2 = x_1^rx_2^s = 0. \] 
Taking $e_1 =e_2$, the second statement obviously follows.

If an element $x$ is invertible, then so is $e_x$. Since $e_x^2 = e_x$ it follows that $e_x =1$. Finally, note that any element $x \in B_0$ satisfies $x^n =0$ for some positive integer $n$.
\end{proof}

Since $e_x^2 = e_x$, the element $e_x$ is a zero divisor if $e_x \neq 1$. In that case $x$ will also be a zero divisor. Therefore $\{ B_e |e\neq 1\}$ is a partition of the zero-divisors of $R$.

\begin{defn}
 Following \cite{B} one can consider any subset $S\subseteq R$ as a directed graph, where there is an edge $x \longrightarrow y$ between two elements $x, y$ if and only if $xy=0$.  We'll refer to this graph as the zero-divisor graph $\Gamma(S)$. 
\end{defn}

Suppose there is an edge $x \longrightarrow y$ between two elements $x, y \in \Gamma(R)$. Then it follows (from Theorem$~\ref{block}$) that there is also an edge $e_x \longrightarrow e_y$. In particular, if $e \neq 0$ then there are no edges between elements inside $B_e$.

\begin{theorem}
Let $R$ be a finite associative ring. Then $\{B_e : e \neq 0\}$ is a partition of the subgraph
$\Gamma(R \setminus B_0)$. 
\end{theorem}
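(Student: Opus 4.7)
The plan is to unpack the statement into two separate claims: first, that $\{B_e : e \neq 0\}$ partitions the vertex set $R \setminus B_0$, and second, that this vertex partition is compatible with the edge structure of $\Gamma(R \setminus B_0)$, in the sense that each $B_e$ is an independent set, so that every edge of the subgraph joins vertices in distinct blocks.

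The first claim is immediate from Lemma \ref{lemma1}: every $x \in R$ lies in a unique block $B_{e_x}$, so the global idempotent partition $R = \bigsqcup_e B_e$ restricts to $R \setminus B_0 = \bigsqcup_{e \neq 0} B_e$ once the nilpotent block is removed.

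For the second claim I would appeal directly to Theorem \ref{block}(2): if $a, b \in B_e$ and $ab = 0$, then $e = 0$. Hence for any $e \neq 0$ the set $B_e$ contains no pair of vertices joined by an edge in $\Gamma(R)$, and in particular none in the induced subgraph $\Gamma(R \setminus B_0)$. Combined with the first claim, every edge of the subgraph must cross between two distinct blocks, which is precisely what it means for $\{B_e : e \neq 0\}$ to partition the graph.

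Because both parts follow transparently from results already established, there is essentially no obstacle; the theorem is really a repackaging of Theorem \ref{block}(2) as a coloring-type statement for the zero-divisor subgraph away from the nilpotents. The one conceptual point worth flagging is why $B_0$ has to be excluded: since $0 \in B_0$ and nilpotents readily multiply to zero, $B_0$ typically carries internal edges, so the restriction to $R \setminus B_0$ is essential for each remaining block to remain an independent set.
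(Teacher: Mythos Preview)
Your proposal is correct and matches the paper's approach: the paper states this theorem without a separate proof environment, relying on the preceding paragraph, which derives exactly your two claims from Theorem~\ref{block}. The only cosmetic difference is that the paper phrases the independence of each $B_e$ via part~(1) (an edge $a\to b$ forces an edge $e_a\to e_b$), whereas you cite part~(2) directly; since part~(2) is the specialization $e_1=e_2$ of part~(1), the arguments are the same.
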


A subset $\{x_1, \cdots, x_n\}$ is called a clique if $x_i \longrightarrow x_j$ and $x_j \longrightarrow x_i$  for all $i \neq j$.

\begin{theorem}
If $\{e_1, \cdots ,e_n\}$ is a maximal clique of non-zero idempotents in $\Gamma(R)$, then $\sum\limits_{i=1}^n e_i =1.$
\end{theorem}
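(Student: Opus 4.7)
The plan is to form the sum $e = \sum_{i=1}^n e_i$ and its complement $f = 1 - e$, and then use the maximality assumption to force $f = 0$.

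First I would verify that $e$ is itself an idempotent. Since $\{e_1,\ldots,e_n\}$ is a clique in $\Gamma(R)$, we have $e_i e_j = 0$ for all $i \neq j$, so expanding $e^2$ leaves only the diagonal terms: $e^2 = \sum_i e_i^2 = \sum_i e_i = e$. The orthogonality also gives $e \cdot e_i = e_i$ and $e_i \cdot e = e_i$ for each $i$.

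Next I would introduce $f = 1 - e$ and check two things: (a) $f$ is an idempotent, which follows from $f^2 = 1 - 2e + e^2 = 1 - e = f$; and (b) $f$ is orthogonal to every $e_i$, since $f e_i = e_i - e\,e_i = e_i - e_i = 0$, and similarly $e_i f = 0$.

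The crux is then to invoke maximality. If $f \neq 0$, then $f$ is a non-zero idempotent with edges to and from every $e_i$ in $\Gamma(R)$, so $\{e_1,\ldots,e_n,f\}$ would be a strictly larger clique unless $f$ coincides with some $e_j$. But $f = e_j$ together with $f e_j = 0$ would give $e_j = e_j^2 = 0$, contradicting $e_j \neq 0$. Hence $f = 0$, i.e.\ $\sum_{i=1}^n e_i = 1$.

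The only subtle point is this last case analysis (making sure $f$ cannot secretly equal one of the $e_i$); the rest is just direct computation with orthogonal idempotents and does not even use finiteness of $R$.
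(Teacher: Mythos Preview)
Your proof is correct and follows exactly the paper's approach: set $d = 1 - \sum_i e_i$, check that $d$ is an idempotent orthogonal to every $e_i$, and invoke maximality to force $d = 0$. You are simply more explicit than the paper, which does not write out the verification that $\sum_i e_i$ is idempotent and skips the check that $d$ is genuinely new (your case $f = e_j$).
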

\begin{proof}
Let $d = 1 - \sum\limits_{i=1}^n e_i$. Then $d$ is also an idempotent and $de_i =e_id = 0$. Therefore $\{d, e_1, \cdots, e_n\}$is a larger clique unless $d=0$.
\end{proof}

%%--------------------Here the manuscript ends--------------------------------
\end{document}